\let\epsilon=\varepsilon
\newcommand*{\Cset}{\mathbb{C}}  
\newcommand*{\inte}{\mathrm{Int}}
\newcommand*{\der}{\mathrm{d}}
\newcommand*{\abs}[1]{\left\lvert#1\right\rvert}   
\newcommand*{\norm}[1]{\left\lVert#1\right\rVert}  
\newcommand*{\sis}[1]{\left\langle#1\right\rangle}  
\newcommand{\kaantyvat}{\operatorname{Mon}(\C^k,\C^m)}
\newcommand{\kaikki}{\operatorname{Lin}(\C^k,\C^m)}
\newcommand{\C}{\mathbb C}
\newcommand{\R}{\mathbb R}
\def\blfootnote{\gdef\@thefnmark{}\@footnotetext}
\newtheorem{thm}{Theorem}[section]
\newtheorem{lem}[thm]{Lemma}
\theoremstyle{definition}
\newtheorem{definition}[thm]{Definition}
\theoremstyle{remark}
\newtheorem{remark}[thm]{Remark}
\numberwithin{equation}{section}
\begin{document}


\title[Geodesic ray transform with matrix weights]{Geodesic ray transform with matrix weights for piecewise constant functions}



\author{Joonas Ilmavirta}
\author{Jesse Railo}
\thanks{University of Jyväskylä, Department of Mathematics and Statistics}
\address{Department of Mathematics and Statistics\\
  University of Jyv\"askyl\"a\\
  P.O. Box 35 (MaD) FI-40014 University of Jyv\"askyl\"a \\
	Finland}
\email{joonas.ilmavirta@jyu.fi}
\email{jesse.t.railo@jyu.fi}

\date{\today}






\begin{abstract}
We show injectivity of the geodesic X-ray transform on piecewise constant functions when the transform is weighted by a continuous matrix weight. The manifold is assumed to be compact and nontrapping of any dimension, and in dimension three and higher we assume a foliation condition. We make no assumption regarding conjugate points or differentiability of the weight. This extends recent results for unweighted transforms.
\end{abstract}

\keywords{Geodesic ray transform, matrix weight, integral geometry, inverse problems}

\subjclass[2010]{44A12, 65R32, 53A99}


\maketitle

\section{Introduction}

This article studies the weighted geodesic X-ray transform with injective matrix weights on nontrapping Riemannian manifolds with strictly convex boundary.
This operator arises in many applications, and one of the basic questions is if the weighted line integrals over all maximal geodesics determine an unknown function.
We show an injectivity result for a class of piecewise constant functions under the assumptions that the manifold admits a strictly convex function and the weight depends continuously on its coordinates on the unit sphere bundle.
In two dimensions, the result follows for nontrapping manifolds with strictly convex boundary.

Let $(M,g)$ be a compact nontrapping Riemannian manifold with strictly convex boundary. We say that the boundary $\partial M$ is \textit{strictly convex} if its second fundamental form is positive definite at any $x \in \partial M$. A smooth function $f\colon M \to \R$ is said to be \textit{strictly convex} if its Hessian $\nabla_x^2 f\colon T_xM \times T_xM \to \R$ is positive definite at any $x \in M$.
We denote by~$SM$ the unit sphere bundle and by~$\Gamma$ the set of maximal unit speed geodesics.
We say that~$M$ is \textit{nontrapping} if every geodesic in~$\Gamma$ has finite length, and we make this assumption. We denote the unique unit speed geodesic through $(x,v) \in SM$ by $\gamma_{x,v}$, that is, $\gamma_{x,v}(0) = x$ and $\dot{\gamma}_{x,v}(0) = v$.

The \textit{geodesic X-ray transform with matrix weights} is defined as follows.
Fix some integers $m,k\geq1$ and denote the set of linear injections (monomorphisms) $\C^k\to\C^m$ by $\kaantyvat$, which is a subset of the space $\kaikki$ of all linear maps.
If $k=m$, we have $\kaantyvat=GL(\C,k)$, and for $m < k$ we have $\kaantyvat=\emptyset$.
The geodesic X-ray transform with weight $W\in C(SM,\kaikki)$ is defined so that it maps a function $f\colon M\to\C^k$ to $I_Wf\colon\Gamma\to\C^m$ defined by
\begin{equation}
I_Wf(\gamma)
=
\int_0^\tau W(\gamma(t),\dot{\gamma}(t))f(\gamma(t))\der t
\end{equation}
for any maximal geodesic $\gamma\colon[0,\tau]\to M$ whenever the integral is defined.

Injectivity of~$I_W$ for smooth functions was established by Paternain, Salo, Uhlmann, and Zhou~\cite{PSUZ16} if $\dim(M) \geq 3$, $(M,g)$ admits a smooth strictly convex function, and $W \in C^\infty(SM;GL(k,\Cset))$.
The result in~\cite{PSUZ16} is based on the methods developed in the work of Uhlmann and Vasy \cite{UV16}.
In this paper we consider a special case of the matrix weighted X-ray transform for the piecewise constant vector-valued functions.
We gain more flexibility on geometrical assumptions and the proof is considerably simpler, but at the expense of only having the result for a restricted class of functions.
Injectivity was shown recently in the case of piecewise constant functions without weights by Ilmavirta, Lehtonen, and Salo~\cite{ILS17}, and reconstruction was studied in~\cite{L19}.
Our main theorem is the following.

\begin{thm}
\label{thm:thm1}
Let $(M,g)$ be a compact nontrapping Riemannian manifold with strictly convex smooth boundary and $W \in C(SM;\kaantyvat)$.
Suppose that either
\begin{enumerate}[(a)]
\item $\dim(M) = 2$, or
\item $\dim(M) \geq 3$ and $(M,g)$ admits a smooth strictly convex function.
\end{enumerate}
If $f\colon M \to \C^k$ is a piecewise constant function and $I_Wf = 0$, then $f \equiv 0$.
\end{thm}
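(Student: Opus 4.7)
I would combine the tangent-geodesic analysis for piecewise constant functions from \cite{ILS17} with the convex-foliation layer stripping of \cite{PSUZ16, UV16}. Assume $f$ is piecewise constant with $I_W f = 0$. Then the jump set $J \subset M$ is a union of smooth embedded hypersurface pieces, and across each connected smooth component $\Sigma$ the jump $[f]_\Sigma := f_+ - f_- \in \C^k$ is a constant vector. It suffices to show $[f]_\Sigma = 0$ for every such $\Sigma$, since then $f$ is locally constant on $M$, and the remaining constants can be ruled out separately.

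\textbf{Local analysis at a clean tangent geodesic.} Call $(p,v) \in SM$ a \emph{clean tangent} to a component $\Sigma$ if $p$ is a smooth interior point of $\Sigma$, $v \in T_p\Sigma$, and the geodesic $\gamma := \gamma_{p,v}$ touches $J$ only at $t = 0$ and meets every other piece of $J$ transversally. Generically the second fundamental form of $\Sigma$ at $p$ in direction $v$ is nonzero, so in a neighborhood of $p$ the hypersurface $\Sigma$ lies (to leading order) strictly on one side of $\gamma$. For a one-parameter deformation $\gamma_\theta$ of $\gamma = \gamma_{\theta_0}$, one side of $\theta_0$ produces a short arc of $\gamma_\theta$ inside the adjacent region of $M \setminus J$ while the other side produces no such arc. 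Expanding $I_W f(\gamma_\theta) = 0$ in $\theta$, all transverse crossings of $J$ give smooth contributions, whereas the tangency contributes a one-sided non-smooth term whose leading coefficient is proportional to $W(p,v)[f]_\Sigma$ (using continuity of $W$ and of the geodesic flow). Vanishing of this non-smooth part together with injectivity of $W(p,v) \in \kaantyvat$ forces $[f]_\Sigma = 0$.

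\textbf{Producing clean tangent geodesics and finishing.} The main obstacle is to realize this local argument on every component $\Sigma$. In dimension $\geq 3$ I use layer stripping via the strictly convex function $\varphi$: put $c_\star := \inf_{\overline{J}}\varphi$, attained at some $p_\star \in \overline{J}$; after a small deformation I may assume $p_\star$ is a smooth interior point of a component $\Sigma_\star$, which is then tangent to the level set $\{\varphi = c_\star\}$ at $p_\star$. For any $v \in T_{p_\star}\Sigma_\star$, strict convexity of $\varphi$ gives $\varphi(\gamma_{p_\star,v}(t)) > c_\star$ for $t$ small and nonzero, so $\gamma_{p_\star,v}$ avoids $J \cap \{\varphi \leq c_\star\}$ except at $p_\star$; a generic perturbation of $v$ within $T_{p_\star}\Sigma_\star$ then secures transversality with the rest of $J$. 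Having killed $[f]_{\Sigma_\star}$, the two adjacent pieces merge, strictly reducing the jump set, and I iterate. In dimension $2$ the same program can be carried out in the style of \cite{ILS17} without a convex function, since the jump set is then a finite graph of smooth arcs and a generic-position argument directly produces clean tangent geodesics on every arc. Finally, once $J$ is exhausted $f$ is constant on each connected component $N$ of $M$; strict convexity of $\partial M$ produces maximal geodesics in $N$ of vanishing length $\tau$ (from near-tangent initial directions at a boundary point $x_0$), and dividing $0 = I_W f(\gamma) = \int_0^\tau W(\gamma(t),\dot\gamma(t))c\,dt$ by $\tau$ and passing to the limit yields $W(x_0,v_\infty)c = 0$ for some boundary-tangent direction $v_\infty$, forcing $c = 0$ by injectivity of $W$.
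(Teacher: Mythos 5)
Your overall architecture (local analysis at tangency points plus layer stripping along the strictly convex function) is in the right spirit, but the central local step does not work under the paper's hypotheses. Your ``clean tangent'' argument tries to identify the coefficient of a one-sided non-smooth term in $\theta\mapsto I_Wf(\gamma_\theta)$: the chord that $\gamma_\theta$ cuts into the region beyond $\Sigma$ has length of order $\sqrt{\theta-\theta_0}$, while you treat all transversal crossings as ``smooth contributions''. With $W$ merely continuous they are not: freezing endpoints, the variation of $\int W$ along a segment is controlled only by the modulus of continuity $\omega_W$ of $W$ composed with the geodesic flow, so comparing $\theta_0+\epsilon$ with $\theta_0-\epsilon$ yields only $\sqrt{\epsilon}\,\norm{W(p,v)[f]_\Sigma}\lesssim \epsilon+\omega_W(C\epsilon)$. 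If $W$ is H\"older of exponent $\leq 1/2$ (or worse), this gives nothing, so the extraction of $W(p,v)[f]_\Sigma$ fails exactly in the regularity regime the theorem is claimed for --- and low regularity of the weight is a genuine enemy here, as the counterexamples of \cite{GN17} for $C^\alpha$ weights show. The paper avoids this by a differently scaled family: in Lemma~\ref{keylemma2} the geodesics $\gamma_v^h$ pass a \emph{corner} of a simplex whose tangent cone is transversal to the hypersurface, so every contribution scales linearly in $h$; after dividing by $h$ the weight can be frozen to $W(x,v^\bot)$ with an error bounded by $\sup\norm{f}\cdot\frac{l(\gamma_v^h\cap\Delta)}{h}\cdot\omega_W$, which vanishes for \emph{any} modulus of continuity, and the remaining unweighted limit is handled by Lemma~\ref{limitlemma} and the Euclidean Vandermonde argument of \cite[Lemma 3.1]{ILS17}.

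A second gap is in the stripping step itself. The $\varphi$-minimal point of the closure of the jump set is generically a vertex or edge point of the tiling where several simplices meet, not a smooth interior point of a single face, and ``after a small deformation'' is not an available move: the jump set is determined by the unknown $f$ and cannot be perturbed. Treating such corner points --- where near-tangent geodesics see several tangent cones simultaneously and one must separate the corresponding constant values --- is precisely the content of the tangent cone/tangent function machinery and of Lemma~\ref{lem:kulmat}, for which your proposal has no substitute. Finally, your claim that in dimension two no convex function is needed and a ``generic-position'' argument produces clean tangent geodesics on every arc is unsupported (tangencies from the wrong side, degenerate tangencies, and tangencies at corners are not ruled out); the paper also uses a global strictly convex foliation in two dimensions, supplied for compact nontrapping surfaces with strictly convex boundary by \cite[Section 2]{PSUZ16}. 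Your final step eliminating locally constant values via short near-tangent geodesics at the boundary is fine and is consistent with how the weight is frozen in Lemma~\ref{keylemma2}.
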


There is also a local version of theorem~\ref{thm:thm1}, see theorem~\ref{thm:supthm}.

\begin{remark}
The result can be generalized by replacing $\C^k$ and $\C^m$ with two Banach spaces and letting~$W$ be an invertible linear map depending continuously on the coordinates on the sphere bundle~$SM$.
\end{remark}

\begin{remark}
The functions $f\colon M\to\C^k$ are vector-valued in the sense that they are sections of the trivial bundle $M\times\C^k$.
We do not study geodesic X-ray tomography of vector fields or higher order tensor fields. 
\end{remark}

Theorem~\ref{thm:thm1} generalizes results of~\cite{ILS17} for the matrix weighted X-ray transform similar to the one studied in~\cite{PSUZ16}.
Our theorem holds if $\dim(M) \geq 2$, $W \in C(SM;GL(k,\Cset))$, and functions are piecewise constant in comparison to \cite{PSUZ16} where it is assumed that $\dim(M) \geq 3$, $W \in C^\infty(SM; GL(k,\Cset))$ and functions are smooth.
In the Euclidean space $\R^n$ with $n\geq3$ injectivity is known for $C^{1,\alpha}$ weights~\cite{I16} but there is an example of non-injectivity for $W\in C^\alpha$ by Goncharov and Novikov~\cite{GN17}.
Boman constructed an example of a smooth nonvanishing weight on the plane for which the weighted X-ray transform for smooth functions is non-injective~\cite{B93}.
Theorem~\ref{thm:thm1} shows that there are no such counterexamples for piecewise constant functions.
The known results --- including the new ones obtained here --- are summarized on Table~\ref{ourtable}.

We will prove Theorem~\ref{thm:thm1} in Section~\ref{sec:proof}.
We remark that Theorem~\ref{thm:thm1} is based on a generalization of~\cite[Lemma 4.2]{ILS17} whereas the rest of the proof is almost identical to the one in~\cite{ILS17}.
The method in~\cite{ILS17} relies on existence of a strictly convex foliation as in the works of Stefanov, Uhlmann, and Vasy~\cite{UV16,SUV17}, but the method of proof is far simpler.
For a further discussion on the foliation condition see~\cite{PSUZ16} and references therein. We say that $M$ satisfies the \textit{foliation condition} if $M$ admits a strictly convex function. We define the precise meaning of a strictly convex foliation in Section \ref{sec:sketch}.

The matrix weighted X-ray transform is related to recovering a matrix valued connection from its parallel transport \cite{FU01,N02a,PSU12,GPSU16,PSUZ16}.
It also has applications in polarization tomography \cite{S94,NS07,H13} and quantum state tomography~\cite{I16}.

One source of weights is pseudolinearization, a procedure where a nonlinear problem is reduced to a linear problem with weights depending on the unknown.
For a more detailed description of the idea, first appearing in \cite{SU98,SUV16}, see e.g. \cite[Section 8]{IM18}.
Pseudolinearization also leads to an iterative inversion algorithm \cite{I16, SU98, SUV16}.

A boundary reconstruction of the normal derivatives of a function from the broken ray transform reduces to a certain weighted geodesic ray transform on the boundary~\cite{I14}.
Some weights can be realized as attenuation, but we make no such assumptions on~$W$.
The attenuated X-ray transform (see e.g. \cite{ABK98, N02b, SU11, AMU18}) is a well-known special case of the matrix weighted X-ray transform and it is the mathematical basis for the medical imaging method SPECT (see e.g. the survey~\cite{F03}).

\begin{table}[]
\label{ourtable}
\begin{tabular}{lclll}
\textbf{Regularity} & \textbf{Dimension} & {$W = \text{Id}$} & {\textbf{$W \in C^\infty$}} & {\textbf{$W \in C$}} \\
\hline
PWC & $=2$ & {Yes. \cite{ILS17}} & Yes! & Yes! \\
$L^2$ or $C^\infty$ & $=2$ & Unknown. & No. \cite{B93} & No. \cite{B93}  \\
PWC & $\geq3$ & Yes. \cite{UV16}  & Yes. \cite{PSUZ16} & Yes! \\
$L^2$ or $C^\infty$ & $\geq3$ & Yes. \cite{UV16}  & Yes. \cite{PSUZ16} & No. \cite{GN17} 
\end{tabular}
\vspace{.5em}
\caption{Is the X-ray transform injective on manifolds that admit a strictly convex function? The answers in various different cases are summarized below. Here ``PWC'' stands for piecewise constant. A ``Yes!'' with an exclamation mark is a new result proven here. In two dimensions injectivity is known on simple manifolds, but the foliation condition does not imply simplicity.}
\end{table}

\subsection*{Acknowledgements}
J.I.\ was supported by the Academy of Finland (decision 295853).
J.R.\ was supported by the Academy of Finland (Centre of Excellence in Inverse Problems Research at the University of Jyv\"askyl\"a in 2017, Centre of Excellence in Inverse Modelling and Imaging at the University of Helsinki in 2018).
The authors are grateful to Jere Lehtonen and Mikko Salo for helpful discussions related to this work.

\section{Proof}
\label{sec:proof}

\subsection{Definitions}

We follow the notation of~\cite{ILS17}, and any details omitted here can be found there.
We review the main concepts here in a somewhat informal manner.

The \textit{standard $m$-dimensional simplex} is the convex hull of the standard base of $\R^{m+1}$. A \textit{regular $m$-simplex on a manifold~$M$} is a $C^1$-smoothly embedded standard $m$-dimensional simplex. 
The boundary of a regular $m$-simplex is a union of $m+1$ regular $(m-1)$-simplices.

We define the \emph{depth} of a point $x$ in a regular $m$-simplex as follows.
We say that $x$ has depth $0$ if $x$ belongs to the interior of the simplex.
We say that $x$ has depth $1$ if $x$ belongs to the interior of a boundary simplex of the simplex.
Other depths are defined similarly up to depth $m$ at the $m+1$ corner points of the original simplex.

If $\Delta_1$ and $\Delta_2$ are two regular $m$-simplices, we say that their boundaries align nicely if $x \in \Delta_1 \cap \Delta_2$ implies that $x$ has the same depth in both, $\Delta_1$ and $\Delta_2$.

We denote $n=\dim(M)$.
A \textit{regular tiling of a manifold} is a collection of regular $n$-simplices which
cover the manifold,
whose interiors are disjoint, and
whose boundaries align nicely.
An example is given in Figure~\ref{fig:kuva}.
A piecewise constant function is such that the values are constant in the interior of every simplex and zero on their boundaries.
The geometry of corners of simplices is important for our argument, and we review the crucial definitions in more depth.

\begin{figure}
\includegraphics[width=\linewidth]{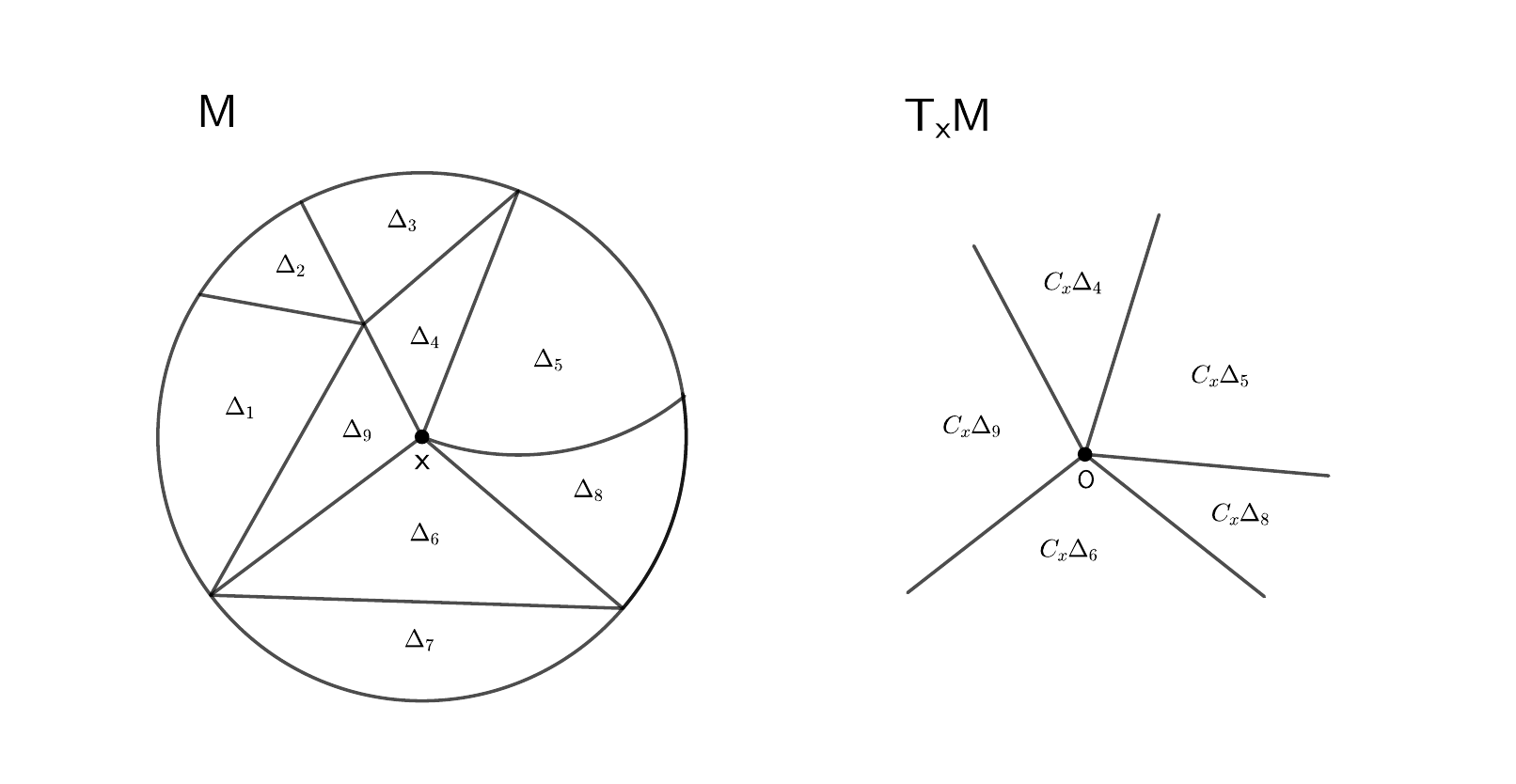}
  \caption{An example of a regular tiling and the tangent space at the point~$x$. The simplices touching~$x$ have their corresponding tangent cones on~$T_xM$. See Definition~\ref{def:C}.}
  \label{fig:kuva}
\end{figure}

\begin{definition}[Tangent cone]
\label{def:C}
Let~$\Delta$ be a regular $m$-simplex in~$M$ with $0\leq m \leq n = \dim(M)$, and let $x \in \Delta$.
Let $\Gamma = \Gamma(x,\Delta)$ be the set of all $C^1$-curves starting at~$x$ and staying in~$\Delta$.
The \textit{tangent cone} of~$\Delta$ at~$x$ is the set
\begin{equation}
C_x\Delta := \{\,\dot{\gamma}(0)\,|\, \gamma \in \Gamma\,\}.
\end{equation}
\end{definition}

\begin{definition}[Tangent function]
Let $f\colon M\to\C^k$ be a piecewise constant function and $x \in M$ with respect to a regular tiling.
Let $\Delta_1,\dots,\Delta_N$ be the simplices of the regular tiling that contain~$x$.
Denote by $v_1,\dots,v_N \in \Cset^k$ the constant values of $f$ in the interior of these simplices.
The \textit{tangent function} $T_xf\colon T_xM \to \Cset^k$ of $f$ at $x$ is defined so that for each $i \in \{1,\dots,N\}$ the function $T_xf$ takes the constant value~$v_i$ in the interior of the tangent cone~$C_x\Delta_i$.
The tangent function takes the value zero in $T_xM \setminus \bigcup_{i=1}^N \inte(C_x \Delta_i)$.
\end{definition}

We stress that the tangent function is not a derivative, as a piecewise constant function is typically not differentiable at the points of interest.
Instead of linearizing the function, we linearize the geometry of the simplices and keep the constant values of the function.

\subsection{Lemmas}

In this subsection we recall a key lemma proved in~\cite[Section 4]{ILS17} and use it to prove a new lemma.

Let~$M$ be a $C^2$-smooth Riemannian surface with $C^2$-boundary.
Suppose the boundary~$\partial M$ is strictly convex at $x \in \partial M$.
Let $\gamma_i, i =1,2$, be two unit speed $C^1$-curves in~$M$ starting nontangentially at~$x$ so that $\abs{\sis{\dot{\gamma}_1(0),\dot{\gamma}_2(0)}} < 1$.

Let the radius $r > 0$ be small enough such that the geodesic ball $B(x,r) \subset M$ is split by the curves $\gamma_i, i =1,2$, into three parts.
Let~$A$ be the middle one. Let $\sigma_i, i =1,2$, be the curves on $T_xM$ with constant speed~$\dot{\gamma}_i(0)$ respectively.
Let~$S$ be the sector in~$T_xM$ laying between~$\sigma_1$ and~$\sigma_2$.

If $h > 0$ and if $v \in T_xM$ is an inward pointing unit vector, let the geodesic~$\gamma_v^h$ be constructed as follows:
Take a unit vector~$w$ normal to~$v$ at~$x$ --- which is unique up to sign --- and let~$w(h)$ be the parallel transport (with respect to the Levi--Civita connection) of $w$ along the geodesic~$\gamma_{x,v}$ by distance~$h$.
Let~$\gamma_v^h$ be the maximal geodesic in the direction of~$w(h)$ at~$\gamma_{x,v}(h)$.
One could denote $\gamma_v^h = \gamma_{\gamma_{x,v}(h),w(h)}$ to be more precise, but we have chosen to keep the notation lighter.

We denote by~$\sigma_v^h$ the corresponding line $hv+w\R$ in~$T_xM$.
The correspondence is not by the exponential map~$\exp_x$ as typically $\gamma_v^h\neq\exp_x(\sigma_v^h)$, but in the sense of lemma~\ref{limitlemma} below. We denote by $\nu \in \partial(SM)$ the inward pointing unit vector of~$\partial M$ at~$x$.

We restate a lemma from~\cite{ILS17} for convenience.

\begin{lem}[{\cite[Lemma 4.1]{ILS17}}]
\label{limitlemma}
Let~$M$ be a $C^2$-smooth Riemannian surface with $C^2$-boundary, which is strictly convex at $x \in \partial M$. There exists an open neighborhood~$U$ of~$\nu$ such that for every $v \in U$ we have
\begin{equation}
\lim_{h \to 0}\frac{1}{h} \int_{\gamma_v^h \cap A} ds = \int_{\sigma_v^1 \cap S} ds.
\end{equation}
\end{lem}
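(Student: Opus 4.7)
The plan is to prove the lemma by a blow-up argument: after rescaling all length scales by $1/h$ in a chart around $x$, both the geodesic $\gamma_v^h$ and the region $A$ converge to their Euclidean linear models $\sigma_v^1$ and $S$ in $T_xM$, while Riemannian arc-length scales by exactly the factor $h$ that appears on the left-hand side of the claim.

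First, I would extend $(M,g)$ slightly across $\partial M$ to a $C^2$-manifold without boundary and fix geodesic normal coordinates $\phi \colon V \to \R^2$ at $x$ with $\phi(x)=0$, identifying $T_xM$ with $\R^2$. In these coordinates $g_{ij}(0)=\delta_{ij}$ and geodesics emanating from $0$ are straight lines. Define the rescaled chart $\psi_h(p):=\phi(p)/h$. Under $\psi_h$, the pulled-back metric tensor at the point $u$ is $g_{ij}(hu)$, which converges uniformly on compact sets to the Euclidean metric, and the Riemannian length of a curve in $M$ equals $h$ times the length of its $\psi_h$-image computed with $g_{ij}(hu)$. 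Consequently $h^{-1}$ times arc-length in $M$ converges to Euclidean arc-length in the rescaled chart on any fixed compact set.

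Next, I would establish three $h\to 0$ convergences. By continuous dependence of the geodesic equation on initial data and continuity of Levi--Civita parallel transport $h \mapsto w(h)$, the rescaled geodesic $\psi_h\circ\gamma_v^h$ converges in $C^1_{\mathrm{loc}}$ to the Euclidean line $\sigma_v^1 = v + w\R$. By the $C^1$-smoothness of $\gamma_i$ at $0$, the rescaled curves $\psi_h\circ\gamma_i$ converge locally in Hausdorff distance to the half-lines $\R_+\dot\gamma_i(0)$, and hence $\psi_h(A)$ converges locally in Hausdorff distance to the sector $S$. Because $v$ lies close to the inward normal $\nu$ (so $v$ is in the interior of $S$) and $\gamma_1,\gamma_2$ start nontangentially with linearly independent tangents, the line $\sigma_v^1$ crosses the two rays bounding $S$ transversally, so $\sigma_v^1 \cap S$ is a compact segment with two endpoints depending continuously on the data.

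Combining the $C^1$-convergence of the rescaled geodesic with the transversal Hausdorff convergence of the rescaled boundary of $A$, the two exit points of $\psi_h(\gamma_v^h)$ from $\psi_h(A)$ converge to the endpoints of $\sigma_v^1 \cap S$. Since $h^{-1}\int_{\gamma_v^h\cap A}\der s$ equals, up to $o(1)$ errors coming from $g_{ij}(hu) - \delta_{ij}$, the Euclidean arc-length of the rescaled segment between these exit points, the identity in the lemma follows. I expect the main obstacle to be verifying that, for $v$ in a sufficiently small neighborhood $U$ of $\nu$, the geodesic $\gamma_v^h$ genuinely crosses $A$ rather than escaping through $\partial M$ or missing one of the curves $\gamma_i$; this is where strict convexity of $\partial M$ at $x$ enters, via a second-order estimate on a boundary defining function along $\gamma_v^h$ which guarantees that $\gamma_v^h$ remains in $M$ for a distance of order $h$ on both sides of $\gamma_{x,v}(h)$, so that the transversal exit points on $\gamma_1$ and $\gamma_2$ are attained before the curve leaves $M$.
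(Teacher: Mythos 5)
The paper itself never proves this lemma --- it is imported verbatim from \cite[Lemma 4.1]{ILS17} --- so your attempt can only be compared with that cited proof. Your blow-up strategy is the same basic mechanism used there: rescale by $1/h$ at $x$, observe that the rescaled metric becomes Euclidean, that $\gamma_v^h$ linearizes to $\sigma_v^1$ (its initial point $\gamma_{x,v}(h)$ rescales to $v$ and $w(h)\to w$), that the $C^1$-curves $\gamma_i$ linearize to the rays bounding $S$, and that Riemannian length scales by the factor $h$ appearing on the left-hand side. This local part of your argument is sound.

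There is, however, a genuine gap at the end. The left-hand side integrates over \emph{all} of $\gamma_v^h\cap A$, where $A$ has fixed radius $r$ and $\gamma_v^h$ is a \emph{maximal} geodesic of $M$, whereas your convergence statements ($C^1_{\mathrm{loc}}$ convergence of the rescaled geodesic, local Hausdorff convergence of $\psi_h(A)$) only control the portion of $\gamma_v^h$ within distance $O(h)$ of $x$. You must exclude contributions to $\gamma_v^h\cap A$ away from $x$: a priori the geodesic could leave the corner region through $\gamma_1$ or $\gamma_2$ and re-enter $A$ at distance comparable to $r$, which would destroy the limit. This is where strict convexity really enters, and in the direction opposite to the one you invoke: the issue is not keeping $\gamma_v^h$ inside $M$ for a distance of order $h$ (its starting point is at depth $\approx h$, so it survives for a distance of order $\sqrt{h}$ near any $C^2$ boundary), but rather that convexity forces the almost tangent maximal geodesic to be \emph{short}, of length $O(\sqrt{h})$, hence contained in $B(x,C\sqrt{h})$ for small $h$; on that scale a Taylor estimate (the geodesic deviates from the line through $\gamma_{x,v}(h)$ in direction $w(h)$ by $O(t^2)=O(h)$, while the distance from that line to the cone spanned by $\dot{\gamma}_1(0),\dot{\gamma}_2(0)$ grows linearly once $|t|\gtrsim h$) shows that $\gamma_v^h\cap A$ is exactly the single chord your rescaling sees. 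Without this step the claimed identity is not justified. A smaller slip: $\nu$ need not lie in $S$ (both $\dot{\gamma}_i(0)$ may lean to the same side of the normal), so ``$v$ is in the interior of $S$'' can fail; what you actually need, and what is true, is that both $\dot{\gamma}_i(0)$ are strictly inward, so each bounding ray of $S$ meets the nearly boundary-parallel line $\sigma_v^1$ exactly once and transversally, making $\sigma_v^1\cap S$ a compact segment whose endpoints depend continuously on $v$.
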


We prove the global result of Theorem~\ref{thm:thm1} by way of proving a local version near a boundary point.
The relevant local version is given below in Lemma~\ref{lem:kulmat}.
A crucial step in its proof is Lemma~\ref{limitlemma}, which allows conversion of the local problem on the manifold into a problem on the tangent space.
However, Lemma~\ref{limitlemma} as stated is not sufficient in the weighted situation, but is used to prove the weighted analogue in Lemma~\ref{keylemma2} below.
Lemma~\ref{keylemma2} can be seen as a generalization of~\cite[Lemma 4.2]{ILS17} for the class of piecewise constant vector-valued functions with matrix weighted integrals.

\begin{lem}
\label{keylemma2}
Let~$M$ be a $C^2$-smooth Riemannian surface with~$C^2$ boundary, which is strictly convex at $x \in \partial M$.
Let~$\tilde{M}$ be such an extension of~$M$ that $x \in \inte(\tilde{M})$.
Let $\Delta\subset M$ be a regular $2$-simplex so that $C_x\Delta\cap T_x\partial M=\{0\}$.
Let $W \in C(\tilde{M};\kaikki)$ and $f\colon\tilde{M} \to \Cset^k$ be a piecewise constant function supported in~$\Delta$.
Then there exists an open neighborhood~$U$ of~$\nu$ such that for every $v \in U$ we have
\begin{equation}
\label{eq:lemma2kaava}
\lim_{h \to 0}\frac{1}{h}\int_{\gamma_v^h}W(\gamma_v^h(s),\dot{\gamma}_v^h(s))f(\gamma_v^h(s))\der s
=
W(x,v^\bot)
\int_{\sigma_v^1}T_xf(\sigma^1_v(s))\der s.
\end{equation}
\end{lem}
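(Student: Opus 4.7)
The plan is to reduce the weighted statement to the unweighted Lemma~\ref{limitlemma} by factoring $W$ out at the value $W(x,v^\bot)$ and controlling the error through continuity of $W$ near $(x,v^\bot)\in SM$.

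By linearity in $f$, it suffices to treat the case $f=c\chi_E$, where $c\in\C^k$ and $E$ is one of the simplices on which $f$ is constant; in particular $E\subset\Delta$. If $x\notin\overline{E}$, then both sides of \eqref{eq:lemma2kaava} vanish for every sufficiently small $h>0$: the left integral is zero because the chord $\gamma_v^h\cap M$ shrinks into an arbitrary neighborhood of $x$ as $h\to 0$ (by strict convexity of $\partial M$ at $x$ and the near-tangential direction $w(h)$, the chord length is $O(\sqrt h)$), and so misses $E$; while the right side vanishes because $T_xf$ is the zero function on $T_xM$. So we may assume $x\in\overline{E}$. Then $C_xE\subset C_x\Delta$, so the transversality hypothesis passes to $E$; this forces $x$ to be a vertex of $E$, and $C_xE$ is a genuine sector $S_E\subset T_xM$ to which Lemma~\ref{limitlemma} applies at $x$.

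The core of the argument is the decomposition
\begin{equation*}
\frac{1}{h}\int_{\gamma_v^h}W(\gamma_v^h(s),\dot{\gamma}_v^h(s))f(\gamma_v^h(s))\der s
= W(x,v^\bot)c\cdot\frac{1}{h}\int_{\gamma_v^h\cap E}\der s
+\frac{1}{h}\int_{\gamma_v^h\cap E}\bigl[W(\gamma_v^h(s),\dot{\gamma}_v^h(s))-W(x,v^\bot)\bigr]c\,\der s.
\end{equation*}
Lemma~\ref{limitlemma}, applied to $E$ in place of $\Delta$, shows that the first summand converges, as $h\to 0$, to $W(x,v^\bot)c\int_{\sigma_v^1\cap S_E}\der s$; by the definition of the tangent function this equals $W(x,v^\bot)\int_{\sigma_v^1}T_xf(\sigma_v^1(s))\der s$, which is the right-hand side of~\eqref{eq:lemma2kaava}.

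To dispatch the error term, fix $\epsilon>0$. Continuity of $W$ at $(x,v^\bot)$ provides $\delta>0$ such that $\norm{W(y,u)-W(x,v^\bot)}<\epsilon$ whenever $(y,u)\in SM$ is within $\delta$ of $(x,v^\bot)$. The main technical step is to verify that, for $v$ in a sufficiently small neighborhood $U$ of~$\nu$ and for $h$ small, every $(\gamma_v^h(s),\dot{\gamma}_v^h(s))$ with $\gamma_v^h(s)\in M$ lies in this $\delta$-ball: the base point is $O(\sqrt h)$-close to $x$ by strict convexity of $\partial M$, and the direction stays $O(\sqrt h)$-close to $w(h)$ along the short chord by continuous dependence of the geodesic flow on its initial conditions, while $w(h)\to v^\bot$. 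Granted this, the error integral is bounded by $\epsilon\abs{c}\cdot h^{-1}\abs{\gamma_v^h\cap E}$, which by Lemma~\ref{limitlemma} converges to $\epsilon\abs{c}\int_{\sigma_v^1\cap S_E}\der s$; letting $\epsilon\to 0$ finishes the proof. The only real obstacle is this uniform position-and-direction estimate for $(\gamma_v^h(s),\dot{\gamma}_v^h(s))$, but it is mild and follows from geodesic flow stability combined with the convexity-based chord bound.
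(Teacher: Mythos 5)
Your proposal is correct and follows essentially the same route as the paper's proof: freeze the weight at its limit value $W(x,v^\bot)$, bound the error term by the product of $\sup\norm{W(\gamma_v^h(s),\dot\gamma_v^h(s))-W(x,v^\bot)}$ (small by continuity, since the curve and its directions converge to $(x,v^\bot)$) with the uniformly bounded ratio $l(\gamma_v^h\cap\Delta)/h$, and then invoke Lemma~\ref{limitlemma} for the unweighted limit. Your extra bookkeeping (splitting into characteristic functions of individual simplices and noting that $x$ must be a vertex of any simplex whose closure contains it) is a harmless refinement of the paper's reduction ``assume $f$ is constant in $\Delta$.''
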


\begin{proof}
By linearity we can assume that~$f$ is constant in~$\Delta$.
A piecewise constant function is a linear combination of characteristic functions of interiors of simplices.

Fix $v \in U$ given by Lemma~\ref{limitlemma}.
Let $s_h\in\gamma_v^h$ be any maximizer of
\begin{equation}
s
\mapsto
\norm{W(\gamma_v^h(s),\dot{\gamma}_v^h(s)) - W(x,v^\bot)},
\end{equation}
where --- as throughout this proof --- we use the operator norm of matrices.
We have
\begin{equation}
\sup_{s \in \gamma_v^h}d((\gamma_v^h(s),\dot{\gamma}_v^h(s)),(x,v^\bot))
\to 0
\end{equation}
as $h \to 0$, and so $(\gamma_v^h(s_h),\dot{\gamma}_v^h(s_h)) \to (x,v^\bot)$ as $h \to 0$.
We have $\lim_{h\to 0}\frac{l(\gamma_v^h\cap\Delta)}{h}=\abs{\sigma^1_v}$, and in particular the fraction $\frac{l(\gamma_v^h\cap\Delta)}{h}$ is uniformly bounded for all small $h>0$.

We are ready to compare the weighted integral on the left-hand side of~\eqref{eq:lemma2kaava} to the corresponding integral with the weight frozen to its limit value $W(x,v^\perp)$ (as $h\to0$).
Straightforward estimates give
\begin{equation}
\label{eq:est}
\begin{split}
&\norm{\frac{1}{h}\int_{\gamma_v^h}(W(\gamma_v^h(s),\dot{\gamma}_v^h(s)) - W(x,v^\bot)fds} \\ 
\leq & \frac{l(\gamma_v^h\cap\Delta)}{\abs{h}} \sup_{s \in \gamma_v^h}\norm{f(s)} \sup_{s \in \gamma_v^h}\norm{W(\gamma_v^h(s),\dot{\gamma}_v^h(s)) - W(x,v^\bot)}.
\end{split}
\end{equation}
As~$f$ is bounded, the quotient $\frac{l(\gamma_v^h\cap\Delta)}{h}$ is bounded, and the matrix norm tends to zero as $h\to0$, the left-hand side of~\eqref{eq:est} tends to zero as well.

We may thus conclude that the limit on the left-hand side of~\eqref{eq:lemma2kaava} is the same as
\begin{equation}
W(x,v^\bot)
\lim_{h \to 0}\frac{1}{h}\int_{\gamma_v^h}f(\gamma_v^h(s))\der s.
\end{equation}
That is, the weight can be frozen to its limiting value.
The function $f$ is constant, so up to that constant the integral is just the length of the geodesic segment in~$\Delta$.
Lemma~\ref{limitlemma} shows that the characteristic function of a simplex satisfies~\eqref{eq:lemma2kaava} in the absence of weight.
This concludes the proof.
\end{proof}

We next consider manifolds of dimension $n \geq 2$. Suppose~$\Sigma$ is a hypersurface containing the point $x \in \inte(M)$ and~$\Sigma$ is strictly convex in a neighbourhood of~$x$.
Let~$V$ be a small neighbourhood of~$x$ such that $V \setminus \Sigma$ consists of two open sets which are denoted by~$V_+$ and~$V_-$.
We choose~$V_+$ to be the one for which the boundary section $\partial V_+ \cap \Sigma$ is strictly convex.
Next we state Lemma~\ref{lem:kulmat} that allows one to build a layer stripping argument that is used to prove Theorem~\ref{thm:thm1}.

\begin{lem}
\label{lem:kulmat}
Let~$M$ be a $C^2$-smooth Riemannian manifold, $W \in C(M; \kaantyvat)$ and $f\colon M \to \Cset^k$ be a piecewise constant function.
Fix $x \in \inte(M)$ and let~$\Sigma$ be an $(n-1)$-dimensional hypersurface through~$x$.
Suppose that~$V$ is a neighbourhood of~$x$ so that
\begin{itemize}
\item $V$ intersects only simplices containing~$x$,
\item $\Sigma$ is strictly convex in~$V$,
\item $f|_{V_-} = 0$, and
\item $I_W f = 0$ over every maximal geodesic in~$V$ having endpoints on~$\Sigma$.
\end{itemize}
Then $f|_V = 0$.
\end{lem}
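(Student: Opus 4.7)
The plan is to show that the tangent function $T_xf\colon T_xM\to\C^k$ vanishes identically. Since $V$ meets only simplices containing $x$, knowing $T_xf\equiv0$ forces the constant value of $f$ on the interior of each such simplex to be zero, so $f|_V=0$.

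First I would set up short test geodesics as in the preamble to Lemma~\ref{keylemma2}. Let $\nu\in T_xM$ be the unit normal to $\Sigma$ at $x$ pointing into $V_+$. For every inward unit direction $v$ in a small neighborhood of $\nu$ and every unit vector $w\in T_xM$ with $w\perp v$, let $\gamma_v^h$ denote the $M$-geodesic that starts at $\gamma_{x,v}(h)$ and travels in the direction of the parallel transport of $w$ along $\gamma_{x,v}$. By strict convexity of $\Sigma$ at $x$, for all sufficiently small $h>0$ the geodesic $\gamma_v^h$ lies in $V_+\subset V$ and meets $\Sigma$ exactly at its two endpoints, so the hypothesis gives $I_Wf(\gamma_v^h)=0$.

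Next I would expand $f$ as a finite linear combination of characteristic functions of interiors of simplices through $x$ and apply Lemma~\ref{keylemma2} term by term. In the case $n\geq 3$ one reduces to the 2-plane $\operatorname{span}(v,w)\subset T_xM$, and the proof of Lemma~\ref{keylemma2} carries over since it relies only on the continuity of $W$ together with Lemma~\ref{limitlemma}. Dividing by $h$ and passing to the limit yields
\begin{equation*}
0=\lim_{h\to 0^+}\frac{1}{h}I_Wf(\gamma_v^h)=W(x,w)\int_{\sigma_v^1}T_xf(\sigma_v^1(s))\,ds,
\end{equation*}
where $\sigma_v^1=v+\R w\subset T_xM$. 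Since $W(x,w)\in\kaantyvat$ is injective, $\int_{\sigma_v^1}T_xf\,ds=0$ for every such pair $(v,w)$, producing an open family of affine lines in $T_xM$ lying on the convex side of $T_x\Sigma$.

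Finally, $T_xf$ is piecewise constant with respect to the finite conical subdivision of $T_xM$ coming from the tangent cones $C_x\Delta_i$, and it already vanishes on the open half-space corresponding to $T_xV_-$ by the hypothesis $f|_{V_-}=0$. Applying the unweighted layer-stripping argument of \cite[Section 4]{ILS17} componentwise then forces $T_xf\equiv 0$ on $T_xM$, completing the proof. The main obstacle I expect is the technicality surrounding the transversality hypothesis $C_x\Delta\cap T_x\Sigma=\{0\}$ of Lemma~\ref{keylemma2}: for simplices whose tangent cone has nontrivial intersection with $T_x\Sigma$, one must argue separately that their contribution to the limit is handled correctly, typically by shrinking $U$ so that the directions $v$ are generic and the resulting lines $\sigma_v^1$ meet the tangent cones transversally.
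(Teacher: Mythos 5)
Your overall strategy is the same as the paper's: test with the short geodesics $\gamma_v^h$ having endpoints on $\Sigma$, freeze the weight via Lemma~\ref{keylemma2}, use injectivity of $W(x,v^\perp)$ to get vanishing \emph{unweighted} integrals of $T_xf$ over an open family of lines in $T_xM$, and finish with the unweighted tangent-space result of \cite{ILS17}. In two dimensions this is complete and matches the paper; one small citation slip is that the finishing step is the explicit Euclidean computation on $T_xM$ (the Vandermonde argument of \cite[Lemma 3.1]{ILS17}), not the layer-stripping of \cite[Section 4]{ILS17}, which is the limit lemma.

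The one genuine soft spot is your treatment of $n\geq 3$: you assert that ``one reduces to the 2-plane $\operatorname{span}(v,w)\subset T_xM$, and the proof of Lemma~\ref{keylemma2} carries over since it relies only on continuity of $W$ and Lemma~\ref{limitlemma}.'' But Lemma~\ref{limitlemma}, on which Lemma~\ref{keylemma2} rests, is a statement about Riemannian \emph{surfaces}, and the geodesics $\gamma_{v,w}^h$ of an $n$-dimensional manifold do not lie in any 2-plane, so as written the reduction is an assertion rather than an argument. The missing device, which the paper makes explicit, is that near $x$ the map $(h,t)\mapsto\gamma_{v,w}^h(t)$ sweeps out a smooth two-dimensional submanifold $S_{v,w}\subset M$ on which these curves are still geodesics (even though $S_{v,w}$ is not totally geodesic), and the two-dimensional argument is applied to $S_{v,w}$, whose tangent plane at $x$ is $\operatorname{span}(v,w)$. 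One then works with almost every pair $(v,w)$, discarding the exceptional ones where simplex boundaries are tangent to $S_{v,w}$ at $x$; this is essentially the transversality issue you flag at the end, and your proposed genericity fix is the right one. With the submanifold construction inserted, your proof coincides with the paper's.
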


\begin{proof}
The lemma follows from Lemma~\ref{keylemma2} using ideas developed in \cite[Lemma 5.1 and Lemma 6.2]{ILS17}.
We summarize the idea briefly, details can be found in the cited paper.

Consider two dimensions first.
Take a unit vector $v\in T_xM$ pointing towards~$V_+$ and $h>0$.
Define the geodesic~$\gamma_v^h$ as above.
The weighted integrals of $f$ over these geodesics vanish by assumption.
By Lemma~\ref{keylemma2} and injectivity of~$W$ everywhere on the sphere bundle, we find that~$T_xf$ integrates to zero over~$\sigma^1_v$.

This argument reduces the X-ray tomography problem on~$M$ to the corresponding problem on~$T_xM$.
We have the freedom to vary the direction~$v$, and any open set is sufficient.
The Euclidean problem is unweighted and can be solved by explicit calculation, see~\cite[Lemma 3.1]{ILS17}.
The calculation is based on describing the direction~$v$ by a parameter and computing derivatives of high orders with respect to that parameter.
That these derivatives uniquely determine the values of~$T_xf$ in the cones, boils down to the invertibility of a Vandermonde matrix.

In higher dimensions one can proceed as follows.
Take a unit tangent vector $w$ tangential to~$\Sigma$ and a unit vector $v$ pointing towards~$V_+$.
As above, we can define the geodesics~$\gamma_{v,w}^h(t)$, where we now keep the dependence on~$w$ explicit.
Near the point~$x$ the function $(h,t)\mapsto\gamma_{v,w}^h(t)$ defines a smooth two-dimensional submanifold $S_{v,w}\subset M$.
Now the geodesics~$\gamma_{v,w}^h(t)$ are geodesics on both~$M$ and~$S_{v,w}$ although the submanifold is not totally geodesic in general.

Now for almost all choices of~$v$ and~$w$, we can apply our two-dimensional result to this submanifold~$S_{v,w}$.
Issues can arise when boundaries of the simplices are tangent to~$S_{v,w}$ at~$x$, but this is rare.
In such cases~$f$ has to vanish on~$S_{v,w}$ and therefore on all simplices that meet this submanifold near~$x$.
For any simplex containing~$x$ there are such~$w$ and~$v$ (see~\cite{ILS17}), and therefore the claim holds.
We point out that for different pairs $(v,w)$ we get a different submanifold.
\end{proof}

\subsection{Proof of Theorem \ref{thm:thm1}}
\label{sec:sketch} 

We are now ready to prove our main theorem.
We begin with a local version.

Following~\cite{PSUZ16}, we say that a subset $U\subset M$ has a \textit{strictly convex foliation} if there is a strictly convex function $\phi\colon U\to\R$ so that the sets $\{x\in U;\phi(x)\geq c\}$ for all $c>\inf_U\phi$ are compact.

\begin{thm}
\label{thm:supthm}
Let~$M$ be a $C^2$-smooth Riemannian manifold with strictly convex boundary and $\dim(M) \geq 2$.
Suppose that a subset $U \subset M$ has a strictly convex foliation.
Let $W \in C(M; GL(k,\Cset))$ and $f\colon M \to \Cset^k$ be a piecewise constant function.
If $I_W f = 0$ for all geodesics in $U$, then $f|_{U} = 0$. 
\end{thm}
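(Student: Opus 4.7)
The plan is to run a layer-stripping argument along the level sets of the strictly convex function $\phi\colon U\to\R$ supplied by the foliation hypothesis, with Lemma~\ref{lem:kulmat} as the engine that propagates the vanishing of $f$ downward across each level. Concretely, I would set
\begin{equation}
c^{*}:=\inf\{c\in\R : f \text{ vanishes on every simplex of the tiling meeting } \{\phi\geq c\}\cap U\}.
\end{equation}
The defining condition is upward-closed and holds vacuously for $c>\sup_U\phi$, so $c^{*}$ is well-defined. Since the Hessian of $\phi$ is positive definite, $\{\phi=\inf_U\phi\}$ contains at most one point, and so the goal $f|_U=0$ reduces to the inequality $c^{*}\leq\inf_U\phi$.

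I would argue by contradiction, assuming $c^{*}>\inf_U\phi$, so that $\{\phi\geq c^{*}\}\cap U$ is compact. Taking $c_n\downarrow c^{*}$ then yields the induction hypothesis that $f\equiv 0$ on every simplex meeting $\{\phi>c^{*}\}\cap U$. Next, at each point $x\in\{\phi=c^{*}\}\cap U$ I would apply Lemma~\ref{lem:kulmat} with $\Sigma$ a small piece of the level set $\{\phi=c^{*}\}$ through $x$ (strictly convex since $\phi$ is) and $V\subset U$ a sufficiently small neighborhood of $x$ meeting only the simplices that contain $x$. In this choice the side $V_+=V\cap\{\phi<c^{*}\}$ plays the role of the strictly convex side of $\Sigma$ (the sublevel set of a strictly convex function is locally convex), and $V_-=V\cap\{\phi>c^{*}\}$ satisfies $f|_{V_-}=0$ by the induction hypothesis. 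Once Lemma~\ref{lem:kulmat} is applied it gives $f|_V=0$, and covering the compact level set $\{\phi=c^{*}\}\cap U$ by finitely many such neighborhoods yields $f\equiv 0$ on $\{\phi>c^{*}-\epsilon\}\cap U$ for some $\epsilon>0$, contradicting the minimality of $c^{*}$.

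The step I expect to be the main obstacle is verifying the geodesic integral hypothesis of Lemma~\ref{lem:kulmat}, namely that $I_Wf=0$ over every maximal geodesic segment $\eta$ in $V$ with endpoints on $\Sigma$. The theorem only supplies the vanishing of $I_Wf$ over maximal geodesics of $M$ whose image is contained in $U$, so I would extend $\eta$ to a maximal geodesic $\gamma$ of $M$ and use that $\phi\circ\gamma$ is strictly convex along $\gamma$ with value $c^{*}$ at both endpoints of $\eta$; this forces $\phi>c^{*}$ on $\gamma\setminus\eta$. A standard continuity plus openness argument then places $\gamma\setminus\eta$ inside the compact set $\{\phi\geq c^{*}\}\cap U$, which simultaneously ensures $\gamma\subset U$ (so the hypothesis applies to $\gamma$) and $f|_{\gamma\setminus\eta}=0$ by the induction hypothesis. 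Hence $I_Wf(\eta)=I_Wf(\gamma)=0$, closing the argument.
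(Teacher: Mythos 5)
Your proposal is correct and follows essentially the same route as the paper: a layer-stripping argument over the level sets of the strictly convex function $\phi$, with Lemma~\ref{lem:kulmat} (built on Lemma~\ref{keylemma2}) supplying the local step at points of the critical leaf, exactly as in the paper's outline and its source \cite{ILS17}. Your infimum/compactness formalization and the geodesic-extension argument (using convexity of $\phi\circ\gamma$ and the induction hypothesis on $\{\phi>c^{*}\}$) just spell out details the paper leaves to the cited reference.
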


\begin{proof}
The proof is very similar to that of \cite[Theorem 5.3 and Theorem 6.4]{ILS17}, and we only give an outline.

The set~$U$ can be foliated by strictly convex hypersurfaces, and we are interested at the times when the foliation meets a new simplex.
It suffices to prove local injectivity in the neighborhood of a strictly convex boundary point --- a point on the leaf of a foliation --- whenever a new simplex is met. If the point meets only one new simplex, then one can use a sequence of short geodesics that pass the simplex and argue as in Lemma~\ref{keylemma2} to see that $f$ has to vanish in the simplex.
If the point meets more simplices, we are in the setting of Lemma~\ref{lem:kulmat}, and we can conclude that the function vanishes on each new simplex.
\end{proof}

Theorem~\ref{thm:supthm} also has corollaries analogous to \cite[Corollaries 6.5--6.7]{ILS17} but we omit them here.

\begin{proof}[Proof of Theorem~\ref{thm:thm1}]
Under the assumptions there is a global foliation: we may choose $U=M$ in Theorem~\ref{thm:supthm}.
See~\cite[Section 2]{PSUZ16} for details.
The proof is complete.
\end{proof}


\def\bibfont{\footnotesize}
\bibliographystyle{alpha}
\bibliography{sample}

\end{document}